\newtheorem{thm}{Theorem}[section]
\newtheorem{prop} [thm]{Proposition}
\newtheorem{cor} [thm]{Corollary}
 \newtheorem{lemma} [thm]{Lemma}
\theoremstyle{definition}
\newtheorem{rem}[thm]{Remark}
\renewcommand\leq{\leqslant} 
\renewcommand\geq{\geqslant}
\DeclareMathOperator{\GL}{GL}
\DeclareMathOperator{\GammaL}{\Gamma L}
\DeclareMathOperator{\SL}{SL}
\title[Base sizes of imprimitive linear groups]{Base sizes of imprimitive linear groups and orbits of general linear groups on spanning tuples}
\author{Joanna B. Fawcett, Cheryl E. Praeger}
\address{
Centre for the Mathematics of Symmetry and Computation\\
School of Mathematics and Statistics\\
The University of Western Australia\\
35 Stirling Highway, Crawley, WA 6009, Australia. Email: \texttt{\{joanna.fawcett, cheryl.praeger$^\dag$\}@uwa.edu.au}\\
\newline $^\dag$ Also affiliated with King Abdulaziz University, Jeddah, Saudi Arabia.
}
\keywords{permutation group, base size, general linear group, imprimitive linear group, spanning sequence}
\begin{document}
 \maketitle
 \vspace{-0.2cm}
 \begin{abstract}
For a subgroup $L$ of the symmetric group $S_\ell$, we determine the minimal base size of  $\GL_d(q)\wr L$ acting on $V_d(q)^\ell$ as an imprimitive linear group. This is achieved by computing the number of orbits of $\GL_d(q)$ on spanning $m$-tuples, which turns out to be the number of $d$-dimensional subspaces of $V_m(q)$. We then use these results to prove that for certain families of subgroups $L$, the  affine groups whose stabilisers are large subgroups of $\GL_d(q)\wr L$ satisfy a conjecture of Pyber concerning bases.
 \end{abstract}

\section{Introduction}
 
Bases are a fundamental tool in permutation group theory and are used extensively in computational group theory (cf. \cite{Ser2003}). For  a permutation group  $G$ on $\Omega$, a \textit{base}  is a subset $B$ of $\Omega$ with the property that only the identity of $G$ fixes every point of $B$.  The \textit{base size} of $G$ on $\Omega$, denoted by $b_\Omega(G)$ or $b(G)$, is the minimal size of a base for $G$. In this paper, we study the base sizes of imprimitive linear groups.

Let $V_d(q)$ denote a $d$-dimensional vector space over the finite field $\mathbb{F}_q$. For any positive integer $\ell$ and subgroup $L$ of the symmetric group $S_\ell$, the wreath product $\GL_d(q)\wr L$  acts naturally on $V_d(q)^\ell$ as an imprimitive linear group (cf.\ Section \ref{s:prelim}). In our first result, we determine the base size of $\GL_d(q)\wr L$  in terms of the \textit{distinguishing number} of $L$  on $[\ell]:=\{1,\ldots,\ell\}$; this latter quantity, denoted by $d_{[\ell]}(L)$ or $d(L)$, is the minimal number of parts in a partition of $[\ell]$ for which only the identity of $L$ fixes every part.

\begin{thm} 
\label{thm:base size}
Let $d$ and $\ell$ be  positive integers and $q$ a prime power. Let $V:=V_d(q)^\ell$. For  $L\leq S_\ell$, 
\begin{align*}
b_V(\GL_d(q)\wr L)&=d+\min\left\{ s :\tbinom{d+s}{d}_q\geq d_{[\ell]}(L)\right\}\\
&=d+\left\lceil\tfrac{ \log d_{[\ell]}(L)}{d\log q} \right\rceil +c,
\end{align*}
where $c=-1$ or $0$.
\end{thm}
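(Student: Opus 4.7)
The plan is to identify bases for $G = \GL_d(q)\wr L$ on $V = V_d(q)^\ell$ with distinguishing colourings of $[\ell]$ under $L$, via a ``transposition'' that converts a $k$-subset of $V$ into an $\ell$-tuple of $k$-tuples in $V_d(q)$.

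First I would fix the wreath action: $(g_1,\ldots,g_\ell;\sigma) \in G$ sends $v = (v_1,\ldots,v_\ell)$ to the vector with $j$-th coordinate $g_j v_{\sigma^{-1}(j)}$. Given $B = \{u^{(1)},\ldots,u^{(k)}\} \subseteq V$ (pick any ordering), define $U_j := (u^{(1)}_j,\ldots,u^{(k)}_j) \in V_d(q)^k$ for each $j \in [\ell]$. Unpacking, $(g_1,\ldots,g_\ell;\sigma)$ pointwise fixes $B$ if and only if $g_j \cdot U_{\sigma^{-1}(j)} = U_j$ componentwise for every $j$.

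Setting $\sigma = 1$, the base-group stabiliser of $B$ is trivial iff each $U_j$ spans $V_d(q)$ (which forces $k \geq d$). Assuming this, the $g_j$ needed by a nontrivial $\sigma$ is uniquely determined, and exists iff $U_{\sigma^{-1}(j)}$ and $U_j$ lie in the same $\GL_d(q)$-orbit for every $j$. Hence, letting $c(j)$ be the $\GL_d(q)$-orbit of $U_j$ (viewed as an orbit on spanning $k$-tuples), the pointwise stabiliser of $B$ in $G$ is trivial iff every $U_j$ spans and the colouring $c$ of $[\ell]$ is distinguishing under $L$. Combined with the orbit count $\binom{k}{d}_q$ promised in the abstract, a base of size at most $k$ exists iff $k \geq d$ and $\binom{k}{d}_q \geq d_{[\ell]}(L)$, which immediately yields the first equality by taking $k = d+s$ minimal.

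For the logarithmic form, I would sandwich
\[
q^{ds} \leq \binom{d+s}{d}_q \leq q^{ds}\prod_{i=1}^{\infty}(1-q^{-i})^{-1},
\]
and compare with $q^{d\alpha} = d_{[\ell]}(L)$ for $\alpha = \log d_{[\ell]}(L)/(d\log q)$, which pins $s_{\min}$ to $\lceil\alpha\rceil$ or $\lceil\alpha\rceil - 1$. The main technical point I expect is the stabiliser calculation: verifying that once each $U_j$ spans, the $g_j$ matching two $\GL_d(q)$-equivalent tuples is unique, and handling the set-versus-ordered-tuple subtlety (assembling $k$ vectors from orbit representatives produces a subset of size at most $k$, which is still a base, matching the lower bound). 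The $c \in \{-1,0\}$ estimate also needs a direct check in small cases (notably $d=1$, $q=2$), where $\prod_i(1-q^{-i})^{-1}$ exceeds $q^d$ and the sandwich above is too loose to conclude on its own.
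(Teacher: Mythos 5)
Your proposal is correct and follows essentially the same route as the paper: the first equality comes from combining the orbit count $\tbinom{k}{d}_q$ on spanning $k$-tuples with the correspondence between bases for the product action and distinguishing colourings of $[\ell]$ by $\GL_d(q)$-orbits (the paper simply cites Bailey and Cameron, Proposition \ref{prop:wreath}, for the reduction you prove directly), and the second equality comes from the same sandwich $q^{ds}\leq\tbinom{d+s}{d}_q\leq c\,q^{ds}$ with a separate direct check for $(d,q)=(1,2)$. You have correctly identified both the set-versus-tuple subtlety and the fact that $(d,q)=(1,2)$ is exactly the case where the multiplicative constant is too weak to give $c\geq -1$.
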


Theorem \ref{thm:base size} gives an upper bound on the base size of any irreducible imprimitive linear group, for we may view such a group $H$ as  a subgroup of $\GL_d(q)\wr L$ where $L$  is the transitive group induced by $H$ on the   $d$-dimensional $\mathbb{F}_q$-vector spaces of a direct sum decomposition preserved by $H$  (cf. Lemma \ref{lemma:irred}). 
In fact, if the decomposition preserved by $H$ is as coarse as possible, then the group $L$ is primitive (cf. Lemma \ref{lemma:irred}), and if $L$ is not the full symmetric or alternating group, then  $d(L)\leq 4$ by \cite{Ser1997,Dol2000,CamNeuSax1984} (cf. Theorem \ref{thm:dist}), so we obtain the following consequence of Theorem \ref{thm:base size}.

\begin{cor}
\label{cor:base size}
Let $V$ be a finite-dimensional $\mathbb{F}_q$-vector space and $H\leq \GL(V)$ where $H$  is irreducible and imprimitive. Let $V=V_1\oplus \cdots\oplus V_\ell$ be a decomposition preserved by $H$, chosen so that  $\ell$ is minimal subject to $\ell \geq 2$. If  the permutation group induced by $H$ on $\{V_1,\ldots,V_\ell\}$ is not $S_\ell$ or $A_\ell$,  then $b_V(H)\leq \dim_{\mathbb{F}_q}(V_1)+1$.
\end{cor}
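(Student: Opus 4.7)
The plan is to reduce to Theorem \ref{thm:base size} by embedding $H$ into a wreath product and then using the distinguishing number bound for primitive permutation groups. Set $d := \dim_{\mathbb{F}_q}(V_1)$. By Lemma \ref{lemma:irred}, all summands $V_i$ share this dimension and $H$ embeds into $\GL_d(q) \wr L$, where $L \leq S_\ell$ is the permutation group induced by $H$ on $\{V_1, \ldots, V_\ell\}$. The minimality of $\ell$ (subject to $\ell \geq 2$) forces $L$ to be primitive on $[\ell]$, again by Lemma \ref{lemma:irred}.

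Since $L$ is primitive and $L \notin \{S_\ell, A_\ell\}$, Theorem \ref{thm:dist} gives $d_{[\ell]}(L) \leq 4$. Combining the containment $H \leq \GL_d(q) \wr L$ with Theorem \ref{thm:base size},
\[
b_V(H) \leq b_V(\GL_d(q) \wr L) = d + \min\bigl\{s : \tbinom{d+s}{d}_q \geq d_{[\ell]}(L)\bigr\}.
\]
To prove $b_V(H) \leq d + 1$ it therefore suffices to verify that $s = 1$ satisfies $\tbinom{d+1}{d}_q \geq d_{[\ell]}(L)$. Since
\[
\tbinom{d+1}{d}_q = 1 + q + q^2 + \cdots + q^d,
\]
this is at least $4$ for every pair $(d, q)$ except possibly $(1, 2)$, and the desired bound follows in all other cases.

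The main obstacle I anticipate is the residual case $(d, q) = (1, 2)$, where $\tbinom{2}{1}_2 = 3$ and the generic argument only handles $d_{[\ell]}(L) \leq 3$. In this regime the wreath-product kernel $\GL_1(2)^\ell$ is trivial, so $H = L$ acts on $V = \mathbb{F}_2^\ell$ as a permutation group on a basis. I would resolve the case either by appealing to the explicit finite list of primitive groups with distinguishing number $4$ furnished by the references underlying Theorem \ref{thm:dist} and checking these directly, or by translating bases in the linear-group sense into collections of subsets of $[\ell]$ and running a tailored distinguishing-type argument exploiting the very restrictive structure of such exceptional $L$.
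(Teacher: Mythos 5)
Your argument follows the paper's proof almost exactly — embed $H$ in $\GL_d(q)\wr L$, use minimality of $\ell$ to get $L$ primitive, invoke Theorem \ref{thm:dist} for $d_{[\ell]}(L)\leq 4$, and check that $s=1$ works in Theorem \ref{thm:base size} because $\tbinom{d+1}{d}_q=1+q+\cdots+q^d\geq 4$. The one place where your proof is incomplete is the ``residual case'' $(d,q)=(1,2)$: you correctly identify that $\tbinom{2}{1}_2=3<4$ breaks the generic argument there, but you only sketch two possible workarounds (checking the finite list of exceptional primitive groups, or a bespoke distinguishing argument) without carrying either out. As written, that leaves a genuine gap.

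The fix, however, is already in your hands: part (i) of Lemma \ref{lemma:irred}, which you cite only for the primitivity/transitivity of $L$, also asserts that irreducibility of $H$ forces $(d,q)\neq(1,2)$. The reason is elementary: if $d=1$ and $q=2$, then $V\simeq\mathbb{F}_2^\ell$ and the set of vectors with an even number of non-zero coordinates is a proper non-zero subspace preserved by any subgroup of $\GL_1(2)\wr S_\ell$, contradicting irreducibility. So the case you flag as problematic is vacuous, and neither of your proposed workarounds is needed. This is exactly how the paper disposes of it — one line citing Lemma \ref{lemma:irred}(i) — after which the inequality $1+q+\cdots+q^d\geq 4$ holds in all remaining cases ($q\geq 3$, or $q=2$ with $d\geq 2$) and the bound $b_V(H)\leq d+1$ follows.
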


Theorem \ref{thm:base size} is proved using a result of Bailey and Cameron \cite{BaiCam2011} that describes the base size of  $\GL_d(q)\wr L$  in terms of the number of orbits of $\GL_d(q)$ on \textit{spanning $m$-tuples}, which are $m$-tuples $(v_1,\ldots, v_m)\in V_d(q)^m$ such that $v_1,\ldots,v_m$ span $V_d(q)$ (such sequences are referred to as ordered multi-bases in the more general setting of \cite{BaiCam2011}).  

In our next result, we determine the number of orbits of $\GL_d(q)$ in its natural action on the set of spanning $m$-tuples; here we find another interpretation for the Gaussian binomial coefficient $\tbinom{m}{d}_q$, which equals, for instance,  the number of $d$-dimensional subspaces of an $m$-dimensional $\mathbb{F}_q$-vector space.

\begin{thm}
\label{thm:orbits}
Let $d$ and $m$ be positive integers and $q$ a prime power. Then $\GL_d(q)$ has exactly $\tbinom{m}{d}_q$  orbits  on the set of spanning $m$-tuples in $V_d(q)^m$.
\end{thm}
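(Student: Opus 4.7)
The plan is to reinterpret spanning $m$-tuples as surjective linear maps $V_m(q) \to V_d(q)$ and translate the $\GL_d(q)$-action accordingly. Specifically, I would associate to each tuple $(v_1,\dots,v_m) \in V_d(q)^m$ the unique linear map $\phi : V_m(q) \to V_d(q)$ sending the $i$-th standard basis vector $e_i$ to $v_i$. The tuple is spanning precisely when $\phi$ is surjective, and this construction is clearly a bijection between spanning $m$-tuples and surjective linear maps $V_m(q) \to V_d(q)$. Under this identification, the diagonal action of $g \in \GL_d(q)$ on tuples corresponds to post-composition $\phi \mapsto g \circ \phi$, so the set of $\GL_d(q)$-orbits on spanning $m$-tuples is in bijection with the set of $\GL_d(q)$-orbits on surjective linear maps under left composition.

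Next, I would show that two surjective linear maps $\phi, \psi : V_m(q) \to V_d(q)$ lie in the same orbit if and only if $\ker \phi = \ker \psi$. The forward implication is immediate since $g \in \GL_d(q)$ is injective. For the converse, if $\ker \phi = \ker \psi = K$, then both $\phi$ and $\psi$ factor through the quotient $V_m(q)/K$ as isomorphisms onto $V_d(q)$; composing one with the inverse of the other yields an element $g \in \GL_d(q)$ with $\psi = g \circ \phi$. Thus orbits are parametrised by their kernels.

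Finally, I would observe that the possible kernels are exactly the $(m-d)$-dimensional subspaces of $V_m(q)$: for a surjection with target of dimension $d$, the kernel has dimension $m-d$ by the rank-nullity theorem, and conversely every subspace $K \leq V_m(q)$ of dimension $m-d$ arises as the kernel of some surjection onto $V_d(q)$ (choose any isomorphism $V_m(q)/K \to V_d(q)$ and precompose with the quotient map). Hence the number of orbits equals the number of such subspaces, which is $\binom{m}{m-d}_q = \binom{m}{d}_q$.

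I do not expect a substantive obstacle: the argument is essentially a careful bookkeeping of the natural bijection between surjections and their kernels, together with transitivity of $\GL_d(q)$ on bases of $V_d(q)$. The only minor subtlety is confirming the claim in the case $m < d$, where there are no spanning $m$-tuples and $\binom{m}{d}_q = 0$ by convention, so both sides vanish and the statement holds trivially.
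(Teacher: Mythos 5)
Your argument is correct and is essentially the paper's proof in dual form: the paper encodes the tuple as the rank-$d$ $(m\times d)$-matrix $A$ with rows $v_1,\ldots,v_m$ and shows the orbit under right multiplication by $\GL_d(q)$ is determined by the column space of $A$, a $d$-dimensional subspace of $\mathbb{F}_q^m$, whereas you determine the orbit by the kernel of the associated surjection $V_m(q)\to V_d(q)$, which is precisely the orthogonal complement of that column space. Both routes give a bijection between orbits and subspaces of $V_m(q)$ of a fixed dimension; the only bookkeeping difference is your final appeal to the symmetry $\binom{m}{m-d}_q=\binom{m}{d}_q$.
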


As an application, we use Theorem \ref{thm:base size} to prove a conjecture of Pyber for certain affine groups. Given a permutation group $G$  of degree $n$, there is a trivial lower bound on $b(G)$, namely $\log{|G|}/\log{n}$, as each element of $G$ is uniquely determined by its action on a base. Pyber conjectured in \cite{Pyb1993} that there exists an absolute constant $C$ such that $b(G)\leq C\log{|G|}/\log{n}$ for every primitive permutation group $G$ of degree $n$. This conjecture has now been verified for all non-affine groups \cite{Ben05,LieSha1999,Faw2013,BurSer15}, as well as for affine groups that are soluble \cite{Ser1996} or coprime \cite{GluMag1998}, or those whose stabilisers are primitive  linear groups \cite{LieSha2002,LieSha2014}. 

Thus the remaining open case for Pyber's conjecture consists of affine groups whose stabilisers are imprimitive linear groups. Here $G=V: G_0$ and acts on $V$, where $V$ is an $\mathbb{F}_p$-vector space for some prime $p$ and the stabiliser $G_0$ is an irreducible imprimitive subgroup of $\GL(V)$. We focus on (not necessarily primitive) affine groups  $G$ for which $G_0$ is a large subgroup of $\GL_d(q)\wr L$ where $L$ is one of several families of groups.

\begin{thm}
\label{thm:Pyber}
Let $d$ and $\ell$ be positive integers and $q$ a prime power.  Let $L\leq S_\ell$ and $V:=V_d(q)^\ell$. Let $G_0$ be a group for which $\SL_d(q)^\ell\leq G_0\leq\GL_d(q)\wr L$ and suppose that $G_0$ induces the group $L$ on the $\ell$ factors of the decomposition of $V$. Let $G:=V:G_0$. Suppose that one of the following holds.
\begin{itemize}
\item[(i)] $d_{[\ell]}(L)\leq c$ where $c$ is an absolute constant.
\item[(ii)] $L$ acts primitively on $[\ell]$.
\item[(iii)] $L$ acts semiregularly on $[\ell]$.
\item[(iv)] $L=S_m\wr S_r$ in its imprimitive action on $[\ell]$ where  $\ell=mr$ and $m,r\geq 2$. 
\end{itemize}
Then there exists an absolute constant $C$ such that 
$$b_V(G)\leq C \frac{\log{|G|}}{\log{n}}+C+2,$$
where $n=q^{d\ell}$ is the degree of $G$. Moreover, we can take $C=\max\{2,\tfrac{\log{2c}}{\log{2}}\}$  when (i) holds, $C=3$ when (ii) holds, and $C=2$ when (iii) or (iv) hold.
\end{thm}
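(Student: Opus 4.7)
The plan is to bound $b_V(G)$ by reducing to Theorem \ref{thm:base size}, then to match this against $\log|G|/\log n$ using the hypothesis $\SL_d(q)^\ell\leq G_0$. For the first reduction, since $G = V{:}G_0$ is affine, appending the zero vector to any base of $G_0$ on $V$ gives a base of $G$, so $b_V(G)\leq b_V(G_0)+1$; since $G_0\leq\GL_d(q)\wr L$, a base for $\GL_d(q)\wr L$ is a base for $G_0$; and Theorem \ref{thm:base size} gives
\[
b_V(G)\leq d + \left\lceil \frac{\log d_{[\ell]}(L)}{d\log q}\right\rceil + 1.
\]
For the lower bound, with $n = q^{d\ell}$ and $|G|=q^{d\ell}|G_0|$, we have $|G_0|\geq |\SL_d(q)|^\ell\cdot|L|$ (as $G_0\supseteq\SL_d(q)^\ell$ and surjects onto $L$). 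Combined with $|\SL_d(q)|\geq q^{d^2-1}/2$ for $d\geq 2$, this yields $\log|G|/\log n\geq d + \log|L|/(d\ell\log q) - O(1/(d\log q))$, with the degenerate case $d=1$ handled directly.

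Cases (i), (iii), and the non-alternating/symmetric part of (ii) are then essentially immediate. In (i), $d_{[\ell]}(L)\leq c$ forces $\lceil\log d_{[\ell]}(L)/(d\log q)\rceil \leq \log_2(2c)$, giving the bound with $C = \max\{2,\log_2(2c)\}$. In (iii), a faithful semiregular $L$ has trivial point stabilisers, so the partition $\{\{x\},[\ell]\setminus\{x\}\}$ for any $x$ in a non-trivial orbit has trivial $L$-stabiliser and hence $d_{[\ell]}(L)\leq 2$, reducing to (i) with $c=2$ and $C=2$. For (ii), if $L$ is primitive but not $A_\ell$ or $S_\ell$ then Theorem \ref{thm:dist} gives $d_{[\ell]}(L)\leq 4$, reducing to (i) with $C\leq 3$.

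The remaining subcases of (ii) and (iv) exploit the extra factor of $|L|$ in $|G|$. For $L\in\{A_\ell,S_\ell\}$ in (ii), $|L|\geq\ell!/2$ contributes $(\log\ell)/(d\log q) - O(1/(d\log q))$ to $\log|G|/\log n$, absorbing the $\lceil(\log\ell)/(d\log q)\rceil$ term in $b_V(G)$ when $C=3$. Case (iv) requires a direct partition-stabiliser computation to show
\[
d_{[\ell]}(S_m\wr S_r) = \min\bigl\{k\geq m:\tbinom{k}{m}\geq r\bigr\},
\]
via the observation that a partition of $[mr]$ has trivial stabiliser if and only if each block uses $m$ pairwise distinct classes (trivialising the $S_m$-factor) and the resulting $r$ blockwise $m$-subsets of the palette are distinct (trivialising the $S_r$-factor). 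Combined with the estimate $\log|S_m\wr S_r|\geq rm\log(m/e) + r\log(r/e)$, this yields the bound with $C=2$ after a modest inequality chase. The main obstacle, and the bulk of the work, is precisely this last matching: verifying that $\log|L|$ contributes just enough to $\log|G|/\log n$ to absorb the unbounded $\log d_{[\ell]}(L)$ term in $b_V(G)$ with the prescribed constant, uniformly across all ranges of $d,q,m,r$ and including small-parameter edge cases.
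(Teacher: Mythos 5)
Your strategy is the same as the paper's: bound $b_V(G)\leq b_V(\GL_d(q)\wr L)+1$ via Theorem \ref{thm:base size}, lower-bound $\log|G|/\log n$ using $|G_0|\geq |\SL_d(q)|^\ell|L|$, and check case by case that $\log |L|$ absorbs $\log d_{[\ell]}(L)$. (The paper packages the comparison once as Lemma \ref{L trick}: the conclusion holds for a given $C>1$ whenever $\log d_{[\ell]}(L)\leq C\log|L|^{1/\ell}+(C-1)\log 2$, and then each of (i)--(iv) reduces to one inequality; your re-derivation of Chan's formula for $d_{[\ell]}(S_m\wr S_r)$ is correct in outline but is simply \cite[Corollary 2.4]{Cha06}, used in Lemma \ref{lemma:chan}.) The genuine gap is that you defer ``the bulk of the work'' to an inequality chase while naming tools that cannot complete it with the prescribed constants. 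In case (iv) with $C=2$ the target inequality is $d(L)^{mr}\leq 2^{mr}|L|^2$ with $d(L)\leq 2mr^{1/m}$, i.e.\ $m^{mr}r^r\leq m!^{2r}\,r!^2$, and this holds with \emph{equality} at $m=r=2$ ($64=64$); so any lossy estimate fails there, and indeed your bound $\log|S_m\wr S_r|\geq rm\log(m/e)+r\log(r/e)$ is negative at $m=r=2$. Similarly, carrying an $O(1/(d\log q))$ error term is not harmless when the constant $C$ is prescribed exactly, and your restriction to $d\geq 2$ in $|\SL_d(q)|\geq q^{d^2-1}/2$ manufactures a $d=1$ special case you never actually handle.

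The paper closes exactly these holes with two exact elementary inequalities that you are missing. First, $1\leq q^d\prod_{i=1}^d(1-q^{-i})$ for all $d\geq 1$ and $q\geq 2$ (since $(1-q^{-i})q\geq 1$ for each $i$), which eliminates the $\GL_d(q)$ error term uniformly and with no case split on $d$. Second, $k^k\leq k!^2$ for all $k\geq 1$ (from $k\leq i(k-i+1)$ for $1\leq i\leq k$), which gives $\ell^\ell\leq |S_\ell|^2$, $4(\ell-1)^\ell\leq \ell!^2$, and $(mr^{1/m})^{mr}=m^{mr}r^r\leq (m!^r r!)^2$ with no small-parameter exceptions. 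Without these (or an explicit finite check of the small cases you allude to), the sharp cases --- (ii) with $L\in\{A_\ell,S_\ell\}$ and especially (iv) --- are not proved.
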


More precise estimates for $C$ may be deduced from the proof of Theorem \ref{thm:Pyber} in Section~\ref{s:Pyber}. For $\ell\geq 2$, the affine groups $V:G_0$ of Theorem \ref{thm:Pyber} are primitive precisely when $L$ is transitive and $(d,q)\neq (1,2)$ (cf.\ Lemma \ref{lemma:irred}), so Theorem \ref{thm:Pyber} includes genuine primitive affine groups.

Condition (i) of Theorem \ref{thm:Pyber} holds for many permutation groups, including those  that are primitive but not $S_\ell$ or $A_\ell$, as previously mentioned. Indeed, Dolfi proves in \cite{Dol2000} that if $L$ is a (not necessarily transitive) permutation group on $[\ell]$ for which  no primitive constituent  contains $A_\ell$, 
then $d(L)\leq 5$. 

It would be interesting to prove Theorem \ref{thm:Pyber} for all transitive subgroups $L$ of $S_\ell$. In order to apply  our methods in general (cf. Lemma \ref{L trick}), it would suffice  to prove that $\log d(L)\leq (C/\ell)\log{|L|}+(C-1)\log{2}$ for some absolute constant $C$ where $C>1$.

\begin{rem}
As stated, our main results on base sizes only apply to linear groups, but each result can be interpreted for the appropriate semilinear groups, for  if $H\leq \GammaL(V)$, then $b_V(H)\leq b_V(H\cap \GL(V))+1$. To see this, let $B$ be a base for $H\cap \GL(V)$. Choose a primitive element $\zeta\in\mathbb{F}_q$ and a non-zero vector $v\in B$. Then $B\cup\{\zeta v\}$ is a base for $H$.
\end{rem}

Both Corollary \ref{cor:base size} and Theorem \ref{thm:Pyber}(ii)  depend on the classification of the finite simple groups, for their proofs rely on the result referred to above concerning the distinguishing numbers of primitive permutation groups.

This paper is organised as follows. In Section \ref{s:prelim}, we state some definitions, notation and preliminary results. In Section \ref{s:orbits}, we prove Theorem \ref{thm:orbits}; in Section \ref{s:basesize}, we prove Theorem \ref{thm:base size} and Corollary \ref{cor:base size}; and in Section \ref{s:Pyber}, we prove Theorem \ref{thm:Pyber}.

\section{Preliminaries}
\label{s:prelim}

 Let $H$ and $K$ be  groups. We denote a semidirect product of $H$ and $K$ (in which $H$ is normal) by $H:K$. If $K$ acts on $[\ell]:=\{1,\ldots,\ell\}$, then $K$ acts on $H^\ell$ by permuting coordinates, and this action defines the \textit{wreath product} $H^\ell:K$, which we denote by $H\wr K$. Suppose in addition that $H$ acts on $\Omega$. Now $H\wr K$ has two natural actions. One is the \textit{product action} on $\Omega^\ell$, in which $K$ acts by permuting coordinates and  $(h_1,\ldots,h_\ell)\in H^\ell$ maps 
$(\alpha_1,\ldots,\alpha_\ell)\in \Omega^\ell$ to $(\alpha_1^{h_1},\ldots,\alpha_\ell^{h_\ell})$. The other is the \textit{imprimitive action} on $\Omega\times [\ell]$, in which $(h_1,\ldots,h_\ell)k\in H\wr K$ maps $(\alpha,i)\in \Omega\times [\ell]$ to $(\alpha^{h_i},i^k)$; if $H$ and $K$ are transitive on $\Omega$ and $[\ell]$ respectively, then $H\wr K$ acts transitively on $\Omega \times [\ell]$ with  blocks of imprimitivity  $\{(\alpha,i):\alpha\in \Omega\}$ for $i\in [\ell]$.

Let $m$ be a positive integer. 
For a group $H$ acting on $\Omega$, an \textit{ordered multi-base of length $m$} is an $m$-tuple $(\alpha_1,\ldots,\alpha_m)\in \Omega^m$ for which $\{\alpha_1,\ldots,\alpha_m\}$ is a base for $H$. Now $H$ acts naturally on $\Omega^m$ and preserves  the set of ordered multi-bases. The following is a result of Bailey and Cameron \cite[Theorem 2.13]{BaiCam2011}.

\begin{prop}[\cite{BaiCam2011}]
\label{prop:wreath}
Let $\ell$ be a positive integer, and let $H$ and $K$ be permutation groups on $\Omega$ and $[\ell]$ respectively. Then   $H\wr K$ has a base of size $m$ under the product action if and only if the number of orbits of $H$ on ordered multi-bases of length $m$ is at least $d_{[\ell]}(K)$.
\end{prop}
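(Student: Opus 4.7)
The plan is to analyse the pointwise stabiliser of a candidate base $B\subseteq\Omega^\ell$ of size $m$ by encoding $B$ as an $\ell\times m$ array. Write $B=\{\beta_1,\ldots,\beta_m\}$ with columns $\beta_j=(\alpha_{1,j},\ldots,\alpha_{\ell,j})\in\Omega^\ell$, and for each $i\in[\ell]$ let $\gamma_i:=(\alpha_{i,1},\ldots,\alpha_{i,m})\in\Omega^m$ denote the $i$-th row. The element $(h_1,\ldots,h_\ell)k\in H\wr K$ acts on this array by permuting the rows through $K$ and acting coordinatewise on each row through the $h_i$.

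First I would unwind the product action on each column $\beta_j$ and check directly that a general element $g=(h_1,\ldots,h_\ell)k$ pointwise fixes $B$ if and only if $h_i$ sends $\gamma_{i^k}$ to $\gamma_i$ for every $i\in[\ell]$ (modulo the wreath-product convention). In particular, $k$ can be extended to an element of the pointwise stabiliser of $B$ if and only if $\gamma_i$ and $\gamma_{i^k}$ lie in the same $H$-orbit on $\Omega^m$ for each $i$. Grouping the indices in $[\ell]$ according to the $H$-orbit of $\gamma_i$ then yields a partition $\Pi(B)$ of $[\ell]$, and the condition on $k$ reads exactly as: $k$ fixes every part of $\Pi(B)$ setwise. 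Moreover, the case $k=1$ forces the only admissible $(h_1,\ldots,h_\ell)$ to be trivial precisely when each $\gamma_i$ is an ordered multi-base for $H$, since then the stabiliser of the set $\{\alpha_{i,1},\ldots,\alpha_{i,m}\}$ is trivial.

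Combining these observations, $B$ is a base for the product action of $H\wr K$ if and only if (a) each $\gamma_i$ is an ordered multi-base of length $m$ for $H$ on $\Omega$, and (b) $\Pi(B)$ is a distinguishing partition of $[\ell]$ for $K$. Both directions of the biconditional then follow quickly. If $B$ is a base, then $\Pi(B)$ is distinguishing with parts labelled by \emph{distinct} $H$-orbits on ordered multi-bases, so there are at least $d_{[\ell]}(K)$ such orbits. Conversely, given at least $d_{[\ell]}(K)$ such orbits, fix a minimal distinguishing partition of $[\ell]$, inject its parts into the set of orbits, and choose any $\gamma_i$ in the orbit assigned to the part containing $i$; reading off the columns $\beta_j$ of the resulting array produces the required base.

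The main technical obstacle is the first step: unravelling the wreath-product action carefully enough that the equivalence between ``pointwise stabilising $B$'' and the conjunction of ``setwise stabilising $\Pi(B)$'' and ``existence of compatible $h_i$'s'' becomes transparent. Once this translation is in hand, the rest of the argument is bookkeeping, with only the mild caveat that in the converse direction one should verify that the assembled $\beta_j$ can be taken to be distinct, which presents no essential difficulty.
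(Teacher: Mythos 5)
Your argument is correct. Note that the paper itself gives no proof of this proposition --- it is quoted verbatim from Bailey and Cameron \cite[Theorem 2.13]{BaiCam2011} --- and your row/column array encoding, with the induced partition $\Pi(B)$ of $[\ell]$ by $H$-orbits of the rows, is exactly the standard argument for that result: the translation ``$B$ is a base iff every row is an ordered multi-base and $\Pi(B)$ is a distinguishing partition for $K$'' is the whole content, and your handling of both directions (counting parts against orbits, and assembling a base from a minimal distinguishing partition plus orbit representatives) is sound, with only the harmless caveats you already flag (fixing the composition convention for $k$, and the possibility of repeated columns, which does not affect the base property).
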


In \cite{Cha06}, Chan determines the distinguishing number of a wreath product $H\wr K$ in its imprimitive action. In particular, she proves that for positive integers $m$ and $r$, the distinguishing number of $S_m\wr S_r$ on $[m]\times [r]$ is  the minimum  $d$ such that $\tbinom{d}{m}$ is at least $r$. The following observation is a simple consequence of this result.

\begin{lemma}
\label{lemma:chan}
Let $m$ and $r$ be positive integers, and let $\Delta:=[m]\times [r]$. Then $$
d_\Delta(S_m\wr S_r)\leq \left\lceil mr^{1/m}\right\rceil.$$
\end{lemma}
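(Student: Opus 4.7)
The plan is to invoke Chan's characterization as quoted in the excerpt: $d_\Delta(S_m \wr S_r)$ is the least positive integer $d$ with $\binom{d}{m} \geq r$. So it suffices to set $d_0 := \lceil m r^{1/m}\rceil$ and verify the inequality $\binom{d_0}{m} \geq r$; the lemma then follows by minimality.

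The key auxiliary fact I would establish is the lower bound
\[
\binom{d}{m} \geq \left(\frac{d}{m}\right)^m \quad \text{for all integers } d \geq m \geq 1.
\]
This can be proved by writing
\[
\binom{d}{m} = \prod_{i=0}^{m-1} \frac{d-i}{m-i},
\]
and checking factor-by-factor that $\frac{d-i}{m-i} \geq \frac{d}{m}$: clearing denominators, this is equivalent to $di \geq mi$, which holds because $d \geq m$ and $i \geq 0$. Multiplying the $m$ factors gives the claimed bound.

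Applying this to $d_0 = \lceil m r^{1/m}\rceil$, I would observe that $d_0 \geq m r^{1/m} \geq m$ (assuming $r \geq 1$; the case $r=0$ is vacuous and $r=1$ is trivial), and therefore $d_0/m \geq r^{1/m}$. The auxiliary inequality then yields
\[
\binom{d_0}{m} \geq \left(\frac{d_0}{m}\right)^m \geq \left(r^{1/m}\right)^m = r,
\]
which by Chan's characterization forces $d_\Delta(S_m \wr S_r) \leq d_0$, completing the proof.

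There is no real obstacle here: the argument is a one-line combinatorial inequality combined with Chan's theorem. The only mild subtlety is noting that the standard lower bound on the binomial coefficient requires $d \geq m$, but this is automatic from the shape of $d_0$.
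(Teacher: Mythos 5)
Your proposal is correct and follows essentially the same route as the paper: both invoke Chan's characterization $d_\Delta(S_m\wr S_r)=\min\{d:\binom{d}{m}\geq r\}$ and then use the elementary bound $\binom{d}{m}\geq (d/m)^m$ for $d\geq m$ (your factor-by-factor verification of $\frac{d-i}{m-i}\geq\frac{d}{m}$ is exactly the content of the paper's displayed product). No gaps; nothing further to add.
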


\begin{proof}
By \cite[Corollary 2.4]{Cha06}, $d_\Delta(S_m\wr S_r)=\min\{d: \tbinom{d}{m}\ \geq r\}$. In particular, $d\geq m$.  
Now
$$\left( \frac{d}{m}\right )^m\leq \frac{d}{m}\frac{d-1}{m-1}\cdots\frac{d-m+1}{1}=\binom{d}{m},
$$
so $d_\Delta(S_m\wr S_r)\leq \min\{ d :(d/m)^m\geq r\}=\left\lceil mr^{1/m} \right\rceil$.
\end{proof}

In contrast, the distinguishing numbers of most primitive permutation groups  are very small. Cameron, Neumann and Saxl \cite{CamNeuSax1984} proved   that all but finitely many primitive subgroups of $S_\ell$ not containing $A_\ell$ have distinguishing number $2$ (using different terminology), after which  Seress \cite{Ser1997} classified the exceptions. Dolfi \cite[Lemma 1]{Dol2000} then proved that the distinguishing numbers of the exceptions are at most $4$. We state this result here for convenience.

\begin{thm}[\cite{CamNeuSax1984,Ser1997,Dol2000}]
\label{thm:dist}
 If $L$ is a primitive subgroup of $S_\ell$ not containing $A_\ell$, then $d_{[\ell]}(L)\leq 4$.
\end{thm}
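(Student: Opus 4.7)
The plan is to reformulate distinguishing in terms of colourings of $[\ell]$, use a counting argument to handle all but finitely many primitive groups $L$, and then dispatch the residual list by direct inspection.

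First, I would translate the statement into colouring language: a partition of $[\ell]$ into $c$ parts corresponds to a surjection $\chi\colon[\ell]\to[c]$, and $L$ fixes every part iff $\chi\circ g=\chi$ for all $g\in L$. Thus $d_{[\ell]}(L)\leq c$ iff some $\chi$ has trivial stabilizer in $L$. Since the number of $\chi\colon[\ell]\to[c]$ fixed by $g$ is $c^{\operatorname{cyc}(g)}$, where $\operatorname{cyc}(g)$ denotes the number of cycles of $g$ on $[\ell]$, such a $\chi$ exists as soon as
\[
\sum_{1\neq g\in L} c^{\operatorname{cyc}(g)-\ell}<1.
\]
(A minor adjustment handles the constraint that $\chi$ is surjective, which only matters when $c^\ell$ is small.)

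Second, following Cameron--Neumann--Saxl, I would try to verify this inequality for $c=2$. For any non-identity $g\in S_\ell$ one has $\operatorname{cyc}(g)\leq(\ell+f(g))/2$, where $f(g)$ is the fixed-point count; so the sum is dominated by the fixed-point statistics of $L$. Combining the Liebeck-type order bound for primitive $L\leq S_\ell$ not containing $A_\ell$ with the classical lower bound on the minimal degree of such an $L$ (both ultimately relying on the classification of finite simple groups), the sum drops below $1$ outside an explicit finite list of primitive groups, namely certain small-degree actions of symmetric and alternating groups on subsets and partitions, together with a handful of almost simple and affine actions. This yields $d_{[\ell]}(L)\leq 2$ with finitely many exceptions; Seress then enumerates those exceptions.

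Third, for each group $L$ on the exceptional list I would exhibit, by hand, a partition of $[\ell]$ into at most $4$ parts whose stabilizer in $L$ is trivial. Dolfi's uniform recipe is to choose a small subset $S\subseteq[\ell]$ whose setwise stabilizer in $L$ is as small as possible, and then refine the two-part partition $\{S,[\ell]\setminus S\}$ by splitting off one or two singletons so that the resulting ordered partition into $3$ or $4$ parts is pointwise rigid. The verification exploits the explicit subgroup structure of the groups on the list (small $S_n$, $A_n$, $\mathrm{PSL}$-type groups and Mathieu groups acting on subsets).

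The main obstacle is the second step. Making the bound $\sum_{g\neq 1}2^{\operatorname{cyc}(g)-\ell}<1$ effective uniformly across primitive $L$ not containing $A_\ell$ needs both Liebeck's order bound and sharp control of cycle structure in the near-simple actions on cosets of maximal subgroups, and the cutoff must be calibrated so that the exceptional set is both finite and small enough to be enumerated. Once that list is in hand, the four-colour verification in the third step is combinatorial and essentially case-by-case.
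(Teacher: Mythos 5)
The paper does not prove this statement at all: Theorem \ref{thm:dist} is imported as a black box, assembled from three citations --- Cameron--Neumann--Saxl for the fact that all but finitely many primitive groups not containing $A_\ell$ admit a $2$-part distinguishing partition (equivalently, a regular orbit on the power set), Seress for the enumeration of the exceptions, and Dolfi for the bound $d(L)\leq 4$ on those exceptions. Your outline faithfully reconstructs exactly this three-stage architecture, and the pieces you do make explicit are sound: the translation to colourings, the union bound $\sum_{1\neq g\in L}c^{\operatorname{cyc}(g)-\ell}<1$ as a sufficient condition (surjectivity of $\chi$ is in fact a non-issue, since a non-surjective distinguishing colouring only witnesses a smaller distinguishing number), and the inequality $\operatorname{cyc}(g)\leq(\ell+f(g))/2$ for $g\neq 1$.

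However, as a proof your proposal has a genuine gap, which you yourself flag: the two load-bearing steps are named but not executed. Making the $c=2$ estimate effective requires CFSG-dependent order bounds and minimal-degree bounds for primitive groups, calibrated so that the exceptional set is finite and explicitly listable; and the residual case analysis (small symmetric and alternating actions on subsets and partitions, Mathieu groups, certain affine and $\mathrm{PSL}$-type actions) is a substantial piece of case-by-case work. Each of these occupies its own paper in the literature, and neither is carried out here. So your submission correctly identifies the proof strategy behind the cited results --- which is all the paper itself relies on --- but it is a plan rather than a proof; to stand on its own it would need the effective counting estimate and the exhaustive verification on Seress's list, or else it should simply cite the three sources as the paper does.
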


Let $V$ be a finite-dimensional $\mathbb{F}_q$-vector space where  $q$ is a power of a prime $p$. A subgroup $H$ of $\GL(V)$ is \textit{irreducible} if it does not preserve any proper non-zero subspaces of $V$, and \textit{imprimitive} if it preserves a decomposition  $V=V_1\oplus\cdots \oplus V_\ell$ where $V_i$ is an $\mathbb{F}_q$-subspace of $V$ for $1\leq i\leq \ell$. If $L$ is the group induced by $H$ on $\{V_1,\ldots,V_\ell\}$ and $\dim_{\mathbb{F}_q}(V_i)=d$ for $1\leq i\leq \ell$, then we may assume that $H\leq \GL_d(q)\wr L$; in particular, this occurs whenever $L$ is transitive. Note that the action of the imprimitive linear group $\GL_d(q)\wr L$ on $V_d(q)^\ell$ is precisely the product action defined above. 

An \textit{affine} group with socle $V$ and stabiliser $H$ is the group  $G=V:H$ arising from the natural action of $H$ on $V$. Note that $H$  is the stabiliser of the $0$ vector. The action of $G$ on $V$ is primitive precisely when $H$ is an irreducible subgroup of $\GL(V)$ with $V$  viewed as an $\mathbb{F}_p$-vector space.

The following is a collection of basic results concerning imprimitive linear groups.
 
\begin{lemma}
\label{lemma:irred}
Let $V$ be a finite-dimensional $\mathbb{F}_q$-vector space and $H\leq \GL(V)$ where $H$ is imprimitive. Let $V=V_1\oplus \cdots \oplus V_\ell$ be a decomposition preserved by $H$ where $\ell\geq 2$, and let $L$ be the subgroup of $S_\ell$ induced by $H$ on $\{V_1,\ldots,V_\ell\}$. Let $d:=\dim_{\mathbb{F}_q}(V_1)$. 
\begin{itemize}
\item[(i)] If $H$ is irreducible, then $L$ is transitive and  $(d,q)\neq (1,2)$. 
\item[(ii)] If the decomposition of $V$ is chosen so that $\ell$ is minimal, then $L$ is primitive.
\item[(iii)] If $V_i=V_d(q)$ for $1\leq i\leq \ell$ and $S\leq \GL_d(q)$ such that $S^\ell\leq H\leq \GL_d(q)\wr L$ and $S$ is transitive on $V_d(q)\setminus \{0\}$, then  $V:H$  is primitive if and only if $L$ is transitive and $(d,q)\neq (1,2)$.
\end{itemize}
\end{lemma}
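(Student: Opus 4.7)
For (i), I would establish transitivity by contradiction: if $L$ had orbits $O_1, \ldots, O_k$ with $k \geq 2$, then $\bigoplus_{i \in O_1} V_i$ would be a nonzero proper $H$-invariant subspace, contradicting irreducibility. For the exceptional case, if $(d,q) = (1,2)$ then $\GL_d(q)$ is trivial, so $H$ acts on $V = \mathbb{F}_2^\ell$ only by permuting the $\ell \geq 2$ coordinates, and the all-ones vector spans a one-dimensional $H$-invariant subspace, again contradicting irreducibility.

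For (ii), I would argue the contrapositive and produce, from any failure of primitivity of $L$, a coarser $H$-invariant decomposition with strictly fewer but still at least two summands. If $L$ is transitive and $\{B_1, \ldots, B_k\}$ is a nontrivial block system with $2 \leq k < \ell$, then setting $W_j := \bigoplus_{i \in B_j} V_i$ yields a coarser decomposition $V = W_1 \oplus \cdots \oplus W_k$ preserved by $H$. If $L$ is intransitive with $r \geq 2$ orbits, the analogous sum over each orbit works; and if every orbit is a singleton (so $L$ is trivial), then combining any two summands suffices provided $\ell \geq 3$.

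For (iii), I would invoke the standard fact that the affine group $V:H$ is primitive on $V$ if and only if $H$ acts irreducibly on $V$ viewed as an $\mathbb{F}_p$-vector space. The forward direction then follows immediately from (i), since every $\mathbb{F}_p$-subspace of $V$ is in particular $\mathbb{F}_q$-invariant. For the converse, assume $L$ is transitive and $(d, q) \neq (1, 2)$, and let $W$ be a nonzero $H$-invariant $\mathbb{F}_p$-subspace of $V$. Pick a nonzero $w = (w_1, \ldots, w_\ell) \in W$ and an index $i$ with $w_i \neq 0$. For each $s_i \in S$, the element $s := (1, \ldots, s_i, \ldots, 1) \in S^\ell \leq H$ gives $s \cdot w - w = (0, \ldots, s_i w_i - w_i, \ldots, 0) \in W$. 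Since $|V_d(q) \setminus \{0\}| \geq 2$ and $S$ is transitive on this set, $S$ fixes no nonzero vector, so some choice of $s_i$ produces a nonzero element of $W$ supported only at coordinate $i$. Acting on this vector by the $i$-th factor $S$ of $S^\ell$ sweeps out all of $V_i \setminus \{0\}$ in that position, so $V_i$ embeds into $W$; transitivity of $L$ then propagates this to every $V_j$, forcing $W = V$.

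The main obstacle I anticipate is the converse in (iii): the combinatorial step that localizes a vector of $W$ to a single coordinate and then fills in the whole block $V_i$ is precisely where the hypothesis $(d, q) \neq (1, 2)$ becomes essential, and one must take care throughout that all operations respect $\mathbb{F}_p$-linearity rather than $\mathbb{F}_q$-linearity.
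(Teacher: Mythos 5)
Your proposal is correct and follows essentially the same route as the paper: summing the $V_i$ over an orbit of $L$ for transitivity in (i) (the paper exhibits the even-weight hyperplane rather than the all-ones line in the $(d,q)=(1,2)$ case, which is immaterial), coarsening the decomposition along blocks for (ii), and in (iii) using an element of $S^\ell$ together with subtraction in the $\mathbb{F}_p$-subspace $W$ to localise a vector to one coordinate, sweeping out $V_i$ with $S$, and propagating by transitivity of $L$. One small slip: in the forward direction of (iii) the justification should be that every $\mathbb{F}_q$-subspace is in particular an $\mathbb{F}_p$-subspace (not that $\mathbb{F}_p$-subspaces are $\mathbb{F}_q$-invariant, which is false), so that $\mathbb{F}_p$-irreducibility gives $\mathbb{F}_q$-irreducibility and (i) applies.
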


\begin{proof}
(i) If $H$ is irreducible and $I$ is an orbit of $L$ on $[\ell]$, then $\oplus_{i\in I} V_i$ is a subspace of $V$ preserved by $H$, so  $L$ is transitive. If also $(d,q)=(1,2)$, then the set of vectors in $V\simeq \mathbb{F}_2^\ell$ with an even number of non-zero entries is preserved by $H$, a contradiction.

(ii) If $L$ is not primitive, then $H$ preserves a coarser decomposition of $V$.

(iii) If $V:H$ is primitive, then $H$ is irreducible and we may apply (i). Conversely, suppose that $L$ is transitive and $(d,q)\neq (1,2)$. Let $U$ be a non-zero $\mathbb{F}_p$-subspace of $V$ preserved by $H$ where $p$ is the characteristic of $\mathbb{F}_q$.  Let $0\neq u=(u_1,\ldots,u_\ell)\in U$. Without loss of generality, we may assume that $u_1\neq 0$.    Since $(d,q)\neq (1,2)$, there exists $v_1\in V_1\setminus \{0,u_1\}$. Since $H$ contains a subgroup $S$ that  is transitive on $V_1\setminus \{0\}$ and fixes $V_i$ pointwise for $2\leq i\leq \ell$, the vector $v:=(u_1-v_1,u_2,\ldots,u_\ell)\in U$. Thus $(v_1,0,\ldots,0)=u-v\in U$, and it follows as above that $(w_1,0,\ldots,0)\in U$ for all $w_1\in V_1$. Since $L$ is transitive on $[\ell]$, we conclude that $U=V$. Hence $V:H$ is primitive.
\end{proof}

\section{Orbits on spanning tuples}
\label{s:orbits}

 Recall that for non-negative integers $m$ and $d$ and a prime power $q$, the \textit{Gaussian binomial coefficient} is defined by
 $$ \binom{m}{d}_q:=\left\{
 \begin{array}{ll}
 \frac{(q^m-1)(q^{m-1}-1)\cdots (q^{m-d+1}-1)}{(q^d-1)(q^{d-1}-1)\cdots (q-1)} & \mbox{if} \ d\leq m\\
 0 & \mbox{if} \ d>m.
 \end{array}
 \right.
$$
Recall also that $\tbinom{m}{d}_q$ is precisely  the number of $d$-dimensional subspaces of $V_m(q)$.

\begin{proof}[Proof of Theorem $\ref{thm:orbits}$]
 Let $\mathcal{S}_{m,d}$ denote the set of spanning $m$-tuples of $\GL_d(q)$ on $V_d(q)=\mathbb{F}_q^d$ (viewed as row vectors). If $d>m$, then $\mathcal{S}_{m,d}$ is empty, so  we may assume that $d\leq m$. Let $\mathcal{M}_{m,d}$ denote the set of $(m\times d)$-matrices over $\mathbb{F}_q$ with rank $d$. There is a natural bijection between $\mathcal{S}_{m,d}$ and $\mathcal{M}_{m,d}$ defined by mapping $(v_1,\ldots,v_m)$ to the $(m\times d)$-matrix with rows $v_1,\ldots,v_m$. Now $\GL_d(q)$ acts on $\mathcal{M}_{m,d}$ by right multiplication, and the actions of $\GL_d(q)$ on $\mathcal{S}_{m,d}$ and $\mathcal{M}_{m,d}$ are equivalent under the bijection above, so the numbers of orbits of $\GL_d(q)$ on $\mathcal{S}_{m,d}$ and $\mathcal{M}_{m,d}$ are the same. 
 
 Let $\mathcal{O}_{m,d}$ be the set of orbits of $\GL_d(q)$ on $\mathcal{M}_{m,d}$. Define a map $\varphi$ from $\mathcal{O}_{m,d}$ to the set of $d$-dimensional subspaces of $\mathbb{F}_q^m$ (viewed as column vectors) by mapping the orbit $\{Ag:g\in \GL_d(q)\}$, for $A\in \mathcal{M}_{m,d}$, to the column space of $A$. Now $\varphi$ is well-defined since $A$ and $Ag$ have the same column space for all  $g\in \GL_d(q)$. Clearly $\varphi$ is surjective; it is also injective, for if $A$ and $B$ are elements of $\mathcal{M}_{m,d}$ with the same column space, then there exists $g\in \GL_d(q)$ such that $Ag=B$. Thus  $|\mathcal{O}_{m,d}|=\binom{m}{d}_q$, as desired.
\end{proof}

Next we give a simple estimation for the Gaussian binomial coefficient.

\begin{lemma}
\label{lemma:gaussianbinomial}
Let $d$ and $s$ be  positive integers and $q$ a prime power. Then 
 $$q^{ds}\leq \binom{d+s}{d}_q\leq \left(1-\frac{1}{q}-\frac{1}{q^2}\right)^{-1}q^{ds}.$$
\end{lemma}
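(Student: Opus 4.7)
The plan is to expand the Gaussian binomial coefficient as
$$\binom{d+s}{d}_q=\prod_{i=1}^{d}\frac{q^{s+i}-1}{q^{i}-1}$$
and then bound the factors individually.

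For the lower bound, I would observe that for each $i$,
$$\frac{q^{s+i}-1}{q^i-1}\geq q^s,$$
which is equivalent to $q^{s+i}-1\geq q^{s+i}-q^s$, i.e.\ $q^s\geq 1$. Multiplying the $d$ factors gives $\binom{d+s}{d}_q\geq q^{ds}$.

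For the upper bound, I would rewrite each factor by pulling out $q^s$:
$$\frac{q^{s+i}-1}{q^i-1}=q^s\cdot\frac{1-q^{-s-i}}{1-q^{-i}}\leq\frac{q^s}{1-q^{-i}},$$
so that
$$\binom{d+s}{d}_q\leq\frac{q^{ds}}{\prod_{i=1}^{d}(1-q^{-i})}.$$
It then suffices to prove $\prod_{i=1}^{d}(1-q^{-i})\geq 1-q^{-1}-q^{-2}$. The straightforward application of the Weierstrass product inequality $\prod(1-a_j)\geq 1-\sum a_j$ to all $d$ factors only yields the bound $1-1/(q-1)$, which is slightly too weak — this is the one subtlety in the argument. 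The fix is to peel off the $i=1$ factor first and apply Weierstrass only to $i\geq 2$:
$$\prod_{i=2}^{d}(1-q^{-i})\geq 1-\sum_{i=2}^{d}q^{-i}\geq 1-\sum_{i=2}^{\infty}q^{-i}=1-\frac{1}{q(q-1)}.$$
Multiplying by the separated factor $1-q^{-1}=(q-1)/q$ gives
$$\prod_{i=1}^{d}(1-q^{-i})\geq \frac{q-1}{q}\cdot\frac{q(q-1)-1}{q(q-1)}=\frac{q^{2}-q-1}{q^{2}}=1-\frac{1}{q}-\frac{1}{q^{2}},$$
which completes the proof.

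The only real obstacle is the calibration just described: a single global application of Weierstrass is insufficient, so one must split off the first factor to retain the correct constant $1-q^{-1}-q^{-2}$. Everything else is direct algebraic manipulation of the product formula for $\binom{d+s}{d}_q$.
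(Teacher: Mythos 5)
Your proof is correct and follows essentially the same route as the paper: factor out $q^s$ from each term $\frac{q^{s+i}-1}{q^i-1}$ of the product formula, get the lower bound from $q^s\geq 1$, and reduce the upper bound to the inequality $\prod_{i=1}^{d}(1-q^{-i})\geq 1-q^{-1}-q^{-2}$. The one difference is that the paper simply cites this last inequality from Neumann and Praeger \cite[Lemma 3.5]{NeuPra1995}, whereas you prove it directly by peeling off the $i=1$ factor and applying the Weierstrass product inequality to the tail $i\geq 2$; your calibration $\left(1-\tfrac{1}{q}\right)\left(1-\tfrac{1}{q(q-1)}\right)=1-\tfrac{1}{q}-\tfrac{1}{q^2}$ checks out, so your version is self-contained where the paper's is not. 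Your observation that a single global application of Weierstrass only gives $1-\tfrac{1}{q-1}$, which is too weak, is exactly the right diagnosis of why the split is needed.
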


\begin{proof}
 Observe that 
$$q^s\leq \frac{q^{s+i}-1}{q^i-1}=q^s\frac{q^i-\frac{1}{q^{s}}}{q^i-1}=q^s\frac{1-\frac{1}{q^{s+i}}}{1-\frac{1}{q^i}}$$
for  $1\leq i\leq d$. In particular, the lower bound holds. Moreover,   
$$\binom{d+s}{d}_q=q^{ds}\frac{\prod_{i=1}^d\left(1-\frac{1}{q^{s+i}}\right)}{\prod_{i=1}^d\left(1-\frac{1}{q^i}\right)}.$$
Since $\prod_{i=1}^d(1-1/q^{s+i})<1$ and $\prod_{i=1}^d(1-1/q^i)\geq 1-1/q-1/q^2$ by \cite[Lemma 3.5]{NeuPra1995}, the upper bound holds.
\end{proof}

\section{Base sizes of imprimitive linear groups}
\label{s:basesize}

We begin by establishing the first equality of Theorem \ref{thm:base size}.

\begin{lemma}
 \label{lemma:actual} 
Let $d$ and $\ell$ be  positive integers and $q$ a prime power. Let $V:=V_d(q)^\ell$ and $L\leq S_\ell$. Then 
$b_V(\GL_d(q)\wr L)=d+\min\{ s :\tbinom{d+s}{d}_q\geq d_{[\ell]}(L)\}.$
\end{lemma}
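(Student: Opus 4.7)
The plan is to deduce this from the Bailey--Cameron result (Proposition \ref{prop:wreath}) and the orbit count given by Theorem \ref{thm:orbits}. The key translation step is to identify the ordered multi-bases of $\GL_d(q)$ acting on $V_d(q)$ with the spanning $m$-tuples in $V_d(q)^m$.

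First, I would verify that a subset $B\subseteq V_d(q)$ is a base for $\GL_d(q)$ on $V_d(q)$ if and only if $B$ spans $V_d(q)$. One direction is immediate: if $B$ spans, then any $g\in \GL_d(q)$ fixing $B$ pointwise acts trivially on a spanning set, hence $g=1$. For the converse, if $B$ fails to span, then it lies inside a proper subspace $W<V_d(q)$; choosing a non-trivial $g\in \GL_d(q)$ that fixes $W$ pointwise (a transvection with axis $W$, say, where $\dim W=d-1$, suffices, by first enlarging $W$ to a hyperplane) gives a non-identity element stabilising every point of $B$, so $B$ is not a base. It follows that the ordered multi-bases of length $m$ for $\GL_d(q)$ on $V_d(q)$ are precisely the spanning $m$-tuples in $V_d(q)^m$.

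Next I would apply Theorem \ref{thm:orbits}, which states that $\GL_d(q)$ has exactly $\binom{m}{d}_q$ orbits on such spanning $m$-tuples (where we interpret $\binom{m}{d}_q=0$ if $m<d$). Proposition \ref{prop:wreath}, applied with $H=\GL_d(q)$, $\Omega=V_d(q)$ and $K=L$, then asserts that $\GL_d(q)\wr L$ has a base of size $m$ on $V$ if and only if $\binom{m}{d}_q\geq d_{[\ell]}(L)$. Taking the minimum over $m$ yields
$$
b_V(\GL_d(q)\wr L)=\min\left\{m:\binom{m}{d}_q\geq d_{[\ell]}(L)\right\},
$$
and since $\binom{m}{d}_q=0$ for $m<d$ while $\binom{d}{d}_q=1\leq d_{[\ell]}(L)$ always, the minimising $m$ satisfies $m\geq d$, so I can re-parameterise by setting $m=d+s$ with $s\geq 0$ to obtain the claimed formula.

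I do not anticipate a significant obstacle: once the base/spanning equivalence is noted, the argument is a direct substitution of Theorem \ref{thm:orbits} into Proposition \ref{prop:wreath}. The only point requiring care is the trivial edge case where $d_{[\ell]}(L)=1$ (so $L$ is trivial); here $s=0$ is permitted and the formula correctly returns $b_V(\GL_d(q)\wr L)=d$, which matches the base size of $\GL_d(q)$ on $V_d(q)$ as expected.
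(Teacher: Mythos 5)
Your proof is correct and is essentially the paper's own argument written out in full: the paper's proof of this lemma consists precisely of observing that ordered multi-bases of length $m$ for $\GL_d(q)$ on $V_d(q)$ are the spanning $m$-tuples and then citing Theorem \ref{thm:orbits} and Proposition \ref{prop:wreath}, with the re-parameterisation $m=d+s$ left implicit. The one caveat --- shared with, and glossed over by, the paper --- is that your transvection argument (and hence the base/spanning equivalence) breaks down for $(d,q)=(1,2)$, where $\GL_1(2)$ is trivial and every $m$-tuple, spanning or not, is a multi-base; the paper implicitly acknowledges this degenerate case by handling $(d,q)=(1,2)$ separately, via an inequality only, in its proof of Theorem \ref{thm:base size}.
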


\begin{proof}
For $\GL_d(q)$ acting on $V_d(q)$, a multi-base of length $m$ is a spanning $m$-tuple for each positive integer $m$. Thus we may apply Theorem \ref{thm:orbits} and Proposition \ref{prop:wreath}. 
\end{proof}

Next we establish some bounds on the base size of $\GL_d(q)\wr L$; these we will use to determine the second equality of Theorem \ref{thm:base size}.

\begin{lemma} 
\label{lemma:bounds}
Let $d$ and $\ell$ be  positive integers and $q$ a prime power. Let $V:=V_d(q)^\ell$ and $L\leq S_\ell$. Then 
$$d+\left\lceil\frac{ \log c(q)d_{[\ell]}(L)}{d\log q} \right\rceil \leq b_V(\GL_d(q)\wr L)\leq d+\left\lceil\frac{ \log d_{[\ell]}(L)}{d\log q} \right\rceil $$
where $c(q):=1-\tfrac{1}{q}-\tfrac{1}{q^2}$.
\end{lemma}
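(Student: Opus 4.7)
The plan is to apply Lemma \ref{lemma:actual} to rewrite $b_V(\GL_d(q)\wr L)=d+s^{*}$, where
$$s^{*}:=\min\!\left\{s\in\mathbb{Z}_{\geq 0}:\tbinom{d+s}{d}_q\geq d_{[\ell]}(L)\right\},$$
and then to convert the defining inequality for $s^{*}$ into two-sided bounds on $s^{*}$ itself by inserting the estimates $q^{ds}\leq \tbinom{d+s}{d}_q\leq c(q)^{-1}q^{ds}$ from Lemma \ref{lemma:gaussianbinomial}. Thus the whole argument reduces to comparing logarithms.

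For the upper bound, I would set $s_0:=\lceil\log d_{[\ell]}(L)/(d\log q)\rceil$ and check that it is admissible in the definition of $s^{*}$. By construction $ds_0\log q\geq \log d_{[\ell]}(L)$, so $q^{ds_0}\geq d_{[\ell]}(L)$, and then the lower estimate in Lemma \ref{lemma:gaussianbinomial} yields $\tbinom{d+s_0}{d}_q\geq q^{ds_0}\geq d_{[\ell]}(L)$. By minimality of $s^{*}$ we deduce $s^{*}\leq s_0$, which is exactly the upper bound stated in the lemma.

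For the lower bound, I would start from the defining inequality $\tbinom{d+s^{*}}{d}_q\geq d_{[\ell]}(L)$. Assuming $s^{*}\geq 1$, the upper estimate in Lemma \ref{lemma:gaussianbinomial} gives $c(q)^{-1}q^{ds^{*}}\geq \tbinom{d+s^{*}}{d}_q\geq d_{[\ell]}(L)$. Since $1/q+1/q^{2}\leq 3/4$ for every prime power $q\geq 2$, we have $c(q)>0$, so we may take logarithms to obtain $ds^{*}\log q\geq \log(c(q)\,d_{[\ell]}(L))$. As $s^{*}$ is an integer this gives $s^{*}\geq \lceil\log(c(q)d_{[\ell]}(L))/(d\log q)\rceil$, which is the claimed lower bound.

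The only mild subtlety is that Lemma \ref{lemma:gaussianbinomial} is stated for positive $s$, so I would dispose of the case $s^{*}=0$ separately. In that case $\tbinom{d}{d}_q=1\geq d_{[\ell]}(L)$ forces $d_{[\ell]}(L)=1$, and the lower bound reduces to $0\geq \lceil\log c(q)/(d\log q)\rceil$, which holds trivially because $c(q)<1$ implies $\log c(q)<0$. I do not anticipate any real obstacle here: the argument is essentially a mechanical translation of the Gaussian binomial estimates into bounds on the minimising integer $s^{*}$, with the only point requiring care being the positivity of $c(q)$ (so that its logarithm is defined) and the boundary value $s^{*}=0$.
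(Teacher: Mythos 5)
Your proposal is correct and follows essentially the same route as the paper: both arguments combine Lemma \ref{lemma:actual} with the two-sided estimate $q^{ds}\leq\tbinom{d+s}{d}_q\leq c(q)^{-1}q^{ds}$ of Lemma \ref{lemma:gaussianbinomial} to bound the minimising integer $s$, the only difference being that you spell out the comparison of the two minima (and the boundary case $s^{*}=0$) which the paper leaves implicit.
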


\begin{proof}
By Lemmas \ref{lemma:gaussianbinomial} and \ref{lemma:actual}, 
$$b(\GL_d(q)\wr L)\leq d+\min\{ s :q^{ds}\geq d(L)\}=d+\left\lceil\tfrac{ \log d(L)}{d\log q} \right\rceil,$$
as desired. Similarly, again by Lemmas \ref{lemma:gaussianbinomial} and  \ref{lemma:actual}, 
$$b(\GL_d(q)\wr L)\geq d+\min\{ s :c(q)^{-1}q^{ds}\geq d(L)\}=d+\left\lceil\tfrac{ \log c(q)d(L)}{d\log q} \right\rceil,$$
as desired. 
\end{proof}

\begin{proof}[Proof of Theorem $\ref{thm:base size}$]
The first equality is Lemma \ref{lemma:actual}, so we focus on the second. For $(d,q)=(1,2)$, by Lemma \ref{lemma:actual}, 
$$b(\GL_1(2) \wr L)\geq 1+\min\{s:2^{s+1}\geq d(L) \}=1+\left\lceil\tfrac{ \log d(L)}{\log 2} \right\rceil -1,
$$
so by Lemma \ref{lemma:bounds}, the assertion of Theorem \ref{thm:base size} holds. For $(d,q)\neq (1,2)$, by Lemma \ref{lemma:bounds}, it suffices to show that $-1\leq \log c(q)/(d\log q)$, where $c(q)=1-1/q-1/q^2$. 
This is equivalent to proving that $q^d\geq c(q)^{-1}$. If $q\geq 3$, then $q^d\geq 3\geq 9/5\geq c(q)^{-1}$, and if $q=2$, then $d\geq 2$, so $q^d\geq 4=c(2)^{-1}$, as desired.
\end{proof}

\begin{proof}[Proof of Corollary $\ref{cor:base size}$]
Let $L$ be the subgroup of $S_\ell$ induced by $H$ on the set $\{V_1,\ldots,V_\ell\}$, and let $d:=\dim_{\mathbb{F}_q}(V_1)$. By Lemma \ref{lemma:irred},  the group $L$ is primitive and $(d,q)\neq (1,2)$. In particular, we may assume that $V_i=V_d(q)$ for $1\leq i\leq \ell$ and $H\leq \GL_d(q)\wr L$. By assumption, $L$ is not $S_\ell$ or $A_\ell$, so $d(L)\leq 4$ by Theorem \ref{thm:dist}. 
Since $\tbinom{d+1}{d}_q=q^d+q^{d-1}+\cdots + q+1\geq 4$, we may take  $s=1$ in Theorem \ref{thm:base size}. Thus $b(H)\leq d+1$.
\end{proof}

\section{Pyber's conjecture}
\label{s:Pyber}

We begin with a sufficient condition for Pyber's Conjecture in the case of affine groups with specified stabilisers.

\begin{lemma}
\label{L trick}
Let $d$ and $\ell$ be positive integers and $q$ a prime power.  Let $L\leq S_\ell$ and  $V:=V_d(q)^\ell$. Let $G_0$ be a group for which $\SL_d(q)^\ell\leq G_0\leq\GL_d(q)\wr L$ and suppose that $G_0$ induces the group $L$ on the $\ell$ factors of the decomposition of $V$. Let $G:=V:G_0$.
If $$ \log d_{[\ell]}(L)\leq C\log{|L|^{\tfrac{1}{\ell}}}+(C-1)\log{2}$$ for some absolute constant $C$ where $C>1$,  then  
$$b_V(G)\leq C \frac{\log{|G|}}{\log{n}}+C+2,$$
where $n:=q^{d\ell}$.
\end{lemma}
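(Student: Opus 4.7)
My plan is to chain together several reductions so that the base size of $G$ gets bounded by $\log|G|/\log n$ plus the constants that appear in the hypothesis on $d_{[\ell]}(L)$.

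First, I would exploit the tower of inclusions. Since $G_0\leq \GL_d(q)\wr L$, any base for $\GL_d(q)\wr L$ in its action on $V$ is automatically a base for $G_0$, hence $b_V(G_0)\leq b_V(\GL_d(q)\wr L)$. Next, to pass from $G_0$ to the affine group $G = V{:}G_0$, I would use the standard trick that the stabiliser of $0\in V$ in $G$ is precisely $G_0$: if $B$ is a base for $G_0$, then $B\cup\{0\}$ is a base for $G$, giving $b_V(G)\leq b_V(G_0)+1$. Combining these with Theorem \ref{thm:base size} (using $\lceil x\rceil\leq x+1$) yields
\[
b_V(G)\;\leq\; d+\frac{\log d_{[\ell]}(L)}{d\log q}+2.
\]

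Second, I would plug in the hypothesis $\log d_{[\ell]}(L)\leq (C/\ell)\log|L|+(C-1)\log 2$ to obtain
\[
b_V(G)\;\leq\; d+\frac{C\log|L|}{d\ell\log q}+\frac{(C-1)\log 2}{d\log q}+2.
\]

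Third, I would produce a matching lower bound on $\log|G|/\log n$. Since $|G|=q^{d\ell}|G_0|$ and $G_0\cap\GL_d(q)^\ell\supseteq \SL_d(q)^\ell$ with $G_0/(G_0\cap\GL_d(q)^\ell)\cong L$, we have $|G|\geq q^{d\ell}|L||\SL_d(q)|^\ell$. Dividing by $\log n=d\ell\log q$ and multiplying by $C$,
\[
C\,\frac{\log|G|}{\log n}\;\geq\; C+\frac{C\log|L|}{d\ell\log q}+\frac{C\log|\SL_d(q)|}{d\log q}.
\]
The $C\log|L|$ terms cancel when I compare the two estimates, so the desired inequality $b_V(G)\leq C\log|G|/\log n + C+2$ reduces to the purely numerical claim
\[
(d-2C)\,d\log q+(C-1)\log 2\;\leq\; C\log|\SL_d(q)|.
\]

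The main obstacle is this final arithmetic check, and I expect to handle it by case analysis. For $d=1$, we have $|\SL_1(q)|=1$ and the inequality reduces to $(C-1)\log 2\leq (2C-1)\log q$, which is immediate from $q\geq 2$ and $C>1$. For $2\leq d\leq 2C$, the first term on the left is non-positive, and $|\SL_d(q)|\geq 6$ easily dominates the residual $(C-1)\log 2$. For $d>2C$, I would invoke the standard bound $|\SL_d(q)|\geq q^{d^2-1}/4$ (using $\prod_{i\geq 1}(1-q^{-i})\geq 1/4$ for $q\geq 2$, as in Lemma \ref{lemma:gaussianbinomial}); then the right-hand side grows like $C(d^2-1)\log q$, which comfortably absorbs the quadratic-in-$d$ left-hand side. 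Collecting these three cases completes the proof.
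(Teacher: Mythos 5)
Your proposal is correct and follows essentially the same route as the paper: both chain $b_V(G)\leq b_V(G_0)+1\leq b_V(\GL_d(q)\wr L)+1$, apply Theorem \ref{thm:base size} together with the hypothesis on $d_{[\ell]}(L)$, and then compare against the lower bound $|G|\geq q^{d\ell}|\SL_d(q)|^\ell|L|$ (the paper packages this as $|X_0|=(q-1)^\ell|\SL_d(q)|^\ell|L|\leq|G|$ for $X_0=\GL_d(q)\wr L$ and absorbs the error terms via $1\leq q^d\prod_{i=1}^d(1-q^{-i})$ and $(C-1)\log 2\leq (C-1)d^2\log q$, rather than by your case analysis on $d$). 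Your final numerical claim does hold in all three cases --- though in the case $d>2C$ note that when $C$ is close to $1$ the decisive saving comes from the $2Cd\log q$ term rather than from $C(d^2-1)\log q$ outgrowing $d^2\log q$ --- so the argument is sound.
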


\begin{proof}
Let $X_0:=\GL_d(q)\wr L$ and   $a(d,q):=\prod_{i=1}^d(1-1/q^i)$.  Now $1\leq (1-1/q^i)q$ for $1\leq i\leq d$,  so $1\leq a(d,q)q^d$, which implies that $$0\leq C\log{a(d,q)}+Cd\log{q}.$$ 
Note that $(C-1)\log{2}\leq (C-1)d^2\log{q}$ since $C>1$. Thus by our assumption on $d(L)$,
$$
\log{d(L)}\leq  C\log{|L|^{\tfrac{1}{\ell}}}+(C-1)d^2\log{q}+ C\log{a(d,q)}+Cd\log{q}.
$$
 Dividing both sides by $d\log{q}$ and adding $d+2$, we obtain
\begin{equation}
\label{eqn:1}
\frac{\log{d(L)}}{d\log{q}}+d+2\leq  C\frac{\log{|L|}}{d\ell\log{q}}+Cd+C\frac{\log{a(d,q)}}{d\log{q}} +C +2
=C\frac{\log |X_0|}{d\ell\log q}+C+2
\end{equation}
since $|X_0|=\GL_d(q)^\ell|L|$ and $|\GL_d(q)|=q^{d^2}a(d,q)$.  Moreover, by Theorem \ref{thm:base size},
\begin{equation}
\label{eqn:2}
b(X_0)+1\leq \left\lceil\frac{\log d(L)}{d\log q}\right\rceil+d+1\leq \frac{\log d(L)}{d\log q}+d+2.
\end{equation}
Note that  $|G|=q^{d\ell}|G_0|$.  Since $L$ is the group  induced by the action of $G_0$ on the $\ell$ factors of the decomposition of $V$, and since the kernel of this action contains $\SL_d(q)^\ell$, it follows that $|X_0|=(q-1)^\ell|\SL_d(q)^\ell||L|\leq (q-1)^\ell|G_0|\leq |G|$. Thus by (\ref{eqn:1}) and (\ref{eqn:2}),
$$b(G)\leq b(G_0)+1\leq b(X_0)+1\leq C\frac{\log{|X_0|}}{d\ell\log{q}}+C +2\leq C\frac{\log{|G|}}{d\ell\log{q}}+C+2,$$
as desired.
\end{proof}

Recall that a permutation group $L\leq S_\ell$ is \textit{semiregular} if $b(L)=1$. 

\begin{proof}[Proof of Theorem $\ref{thm:Pyber}$]
 By Lemma \ref{L trick}, it suffices to find an absolute constant $C$ with $C>1$ such that \begin{equation}
\label{qd=2}
\log{d(L)}\leq C\log{|L|^{\frac{1}{\ell}}}+(C-1)\log{2}.
\end{equation}
 Note that for $\ell=1$, (\ref{qd=2}) holds with $C:=2$. 
 If (i) holds, then   (\ref{qd=2}) holds with $C:=\max\{2,\log(2c)/\log{2}\}$. Moreover, if (iii) holds, then   $d(L)\leq  2$, so  (\ref{qd=2}) holds with $C:=2$, and if (ii) holds and $L$ is  not $A_\ell$ or $S_\ell$, then $d(L)\leq 4$ by Theorem \ref{thm:dist}, so (\ref{qd=2}) holds with $C:=3$.

Observe that $k^k\leq k!^2$ for every positive integer $k$, for if $1\leq i\leq k$, then $k\leq i(k-i+1)$, so $k^k\leq \prod_{i=1}^k i(k-i+1)=k!^2$.

Suppose that (ii) holds and $L$ is $A_\ell$ or $S_\ell$, in which case $d(L)=\ell-1$ or $\ell$ respectively. Now $4(\ell-1)^\ell\leq 4(\ell-1)(\ell-1)!^2\leq \ell!^2$ and $\ell^\ell\leq \ell!^2$, so $d(L)^\ell\leq |L|^2$ in either case. Hence  (\ref{qd=2}) holds with $C:=2$.

If (iv) holds and $L=S_m\wr S_r$ where $\ell=mr$ and $m,r\geq 2$, then $d(L)\leq 2mr^{1/m}$ by Lemma \ref{lemma:chan}, and $(mr^{1/m})^{mr}\leq (m!^rr!)^2$, so  (\ref{qd=2}) holds with $C:=2$.
\end{proof}

\section*{Acknowledgements}

This research forms part of the Discovery Project grant DP130100106 of the second author, funded by the Australian Research Council. The first author is supported by that same grant. We would like to thank Aner Shalev for suggesting we look at the remaining open case for Pyber's conjecture, and Martin Liebeck for some observations regarding  Theorem \ref{thm:orbits}. 

We would especially like to give our heartfelt thanks to the group of mathematicians who, as a collective at the  2014 annual research retreat of the Centre for the Mathematics of Symmetry and Computation (CMSC), discovered and gave a recursive proof of a version of Theorem \ref{thm:orbits}. This group includes
Brian Corr, 
Alice Devillers, 
Stephen Glasby, 
Cai Heng Li, 
Dugald Macpherson and
Gabriel Verret.

\scriptsize
\bibliographystyle{acm}
\bibliography{jbf_references}

\end{document}